\theoremstyle{plain}
\newtheorem{theorem}{Theorem}[section]
\newtheorem{lemma}[theorem]{Lemma}
\newtheorem{proposition}[theorem]{Proposition}
\theoremstyle{definition}
\newtheorem{definition}[theorem]{Definition}
\newtheorem*{notation}{Notation}
\newtheorem{remark}[theorem]{Remark}
\numberwithin{equation}{section}
\newcommand{\Cs}{\ensuremath{\mathrm{C}^*}}
\newcommand{\Csr}{\ensuremath{\mathrm{C}_r^*}}
\newcommand{\E}{\ensuremath{\mathcal{E}}}
\newcommand{\Ew}{\ensuremath{\mathcal{E}^w}}
\DeclareMathOperator{\re}{\R e}
\newcommand{\R}{\mathbb{R}}
\newcommand{\C}{\mathbb{C}}
\renewcommand{\H}{\mathbb{H}}
\newcommand{\GR}{\ensuremath{\mathrm{SL}(2,\R)}}
\newcommand{\GC}{\ensuremath{\mathrm{SL}(2,\C)}}
\newcommand{\GF}{\ensuremath{\mathrm{SL}(2,F)}}
\newcommand{\scal}[2]{\langle #1,#2\rangle}
\newcommand{\ind}[3]{\ensuremath{\mathop{\rm{Ind}}\nolimits_{#1}^{#2}{#3}}}
\newcommand{\mat}[4]{\left[\begin{array}{cc}#1&#2\\#3&#4\end{array}\right]}
\def\bn{\bar{n}}
\def\Rw{\mathcal{R}_w}
\newcommand{\n}{\boldsymbol{\bar{n}}}
\newcommand{\m}{\boldsymbol{m}}
\newcommand{\ab}{\boldsymbol{a}}
\newcommand{\lm}{\boldsymbol{l}}
\newcommand{\triv}{\textbf{1}}
\DeclareMathOperator{\F}{\mathcal{F}}
\DeclareMathOperator{\Fw}{\mathcal{F}_\mathit{w}}
\DeclareMathOperator{\Iw}{\mathcal{I}_\mathit{w}}
\DeclareMathOperator{\Uw}{\mathcal{U}_\mathit{w}}
\newcommand{\N}{\ensuremath{\bar{N}}}
\newcommand{\s}{\mathcal{S}\left(\R^n\right)}
\newcommand{\sF}{\mathcal{S}\left(F^n\right)}
\newcommand{\sdx}{\mathcal{S}\left(F^2\right)}
\begin{document}

\title[$\Cs$-algebraic intertwiners for $\mathrm{SL}(2)$]{$\Cs$-algebraic intertwiners for principal series: case of $\mathrm{SL}(2)$}
\author{Pierre Clare}
\address{Pierre Clare\\ The Pennsylvania State University\\ Department of Mathematics\\ McAllister Building\\ University Park, PA - 16802}
\email{clare@math.psu.edu}
\subjclass[2010]{22D25, 46L08, 22E46}
\keywords{Group $\Cs$-algebras, Hilbert modules, semisimple Lie groups, principal series representations, intertwining operators.}
\date{\today}

\setcounter{tocdepth}{3}

\begin{abstract}
We construct and normalise intertwining operators at the level of Hilbert modules describing the principal series of $\GF$ for $F=\R$, $\C$ or $\H$. Normalisation is achieved through the use of a Fourier transform defined on some homogenous space and twisted by a Weyl element. Normalising factors are also explicitely obtained. In the appendix we relate reducibility points to a certain distribution arising from the non-normalised intertwiners.
\end{abstract}

\maketitle


\section{\texorpdfstring{Introduction: a $\Cs$-algebraic point of view on principal series}{Introduction: a C*-algebraic point of view on principal series}}

The study of the tempered dual $\widehat{G}_r$ of a semisimple Lie group $G$ in relation to the Plancherel formula is essentially the work of Harish-Chandra \cite{HC3}, who described the various series of representations carrying the Plancherel measure. Because of its measure-theoretic nature, this approach does not require \emph{all} tempered representations to be taken into account. However, in order to understand $\widehat{G}_r$ as a topological space, a full description of the intertwining relations among the principal series is necessary, for which A. W. Knapp and E. M. Stein developed the main tools in \cite{KS1, KS2}. Facts and references about these matters may be found in \cite{Knapp1} and \cite{Lipsman}.

If $G$ admits more than one conjugacy class of Cartan subgroups, $\widehat{G}_r$ may not be Hausdorff when equipped with the Fell-Jacobson topology. According to the general philosophy of Non-commutative Geometry, the relevant algebra related to $\widehat{G}_r$ is the reduced $\Cs$-algebra of the group, denoted by $\Cs_r(G)$. References on the relations between the properties of $\Csr(G)$ and the representation theory of $G$ are \cite{Dixmier} and \cite{T}.

In order to analyse $\Csr(G)$, it is natural to seek a formulation of the basic objects and results of semisimple theory in terms of operator algebras. Elements of such a $\Cs$-algebraic description of the principal series and Bruhat theory were obtained in \cite{PThese} and \cite{PArtmodules}. The purpose of the present article is to discuss the analogue of Knapp-Stein theory of intertwining operators in this framework for $G=\mathrm{SL}(2)$.

Our main result is Theorem~\ref{thm}, in which an explicit Hilbert module isometry is constructed by means of a Fourier transform defined on a homogenous space of $G$. This operator is considered as a \emph{$\Cs$-algebraic intertwiner} because it commutes to the action of $\Csr(G)$ on its domain, an induction Hilbert module called the \emph{universal principal series} of $G$. Moreover, it relates to the \emph{standard intertwining integral}, an operator defined only on a dense subspace of the universal principal series, in a way that translates the normalising process of Knapp and Stein at the level of Hilbert modules.

The paper is organised as follows. After recalling some structural facts and fixing notations, Section~\ref{UPS} is devoted to the description of the universal principal series $\E$ of $\mathrm{SL}(2)$. In particular, Proposition~\ref{caractE} characterises a convenient subspace of functions in $\E$. Section~\ref{IntertwineSect} starts with a description of the standard intertwining integral. The notion of $\Cs$-normalisation is formally introduced in Paragraph~\ref{Normalprinciple} (Definition~\ref{normaldef}), and the normalisation theorem is established in Paragraph~\ref{thmsect}. In the \hyperref[Appendix]{appendix}, we relate the reducibility points in the classical principal series to a certain distribution responsible for the unboundedness of the standard intertwining integral.

\section{Universal principal series}\label{UPS}

\subsection{Notations}\label{NotaF}

Throughout the paper, $F$ will denote one of the fields $\R$ or $\C$ or the skew field $\H$ of quaternions and $F^\times$ its multiplicative group. Considering $F$ as a real vector space, we let $d_F=\dim_\R(F)$, so that $d_\R=1$, $d_\C=2$ and $d_\H=4$. Denoting $i$, $j$, $k$ the usual quaternionic units, a generic quaternion has the form $q=a+ib+jc+kd$ and a complex (resp. real) number will be considered as a quaternion for which $c=d=0$ (resp. $b=c=d=0$). The \emph{conjugate} of $q$ is defined by $\bar{q}=a-ib-jc-kd$. Finally, we define the \emph{real part} of an element $x$ in $F$ by $\re(x)=\frac{1}{2}\left(x+\bar{x}\right)$, and its \emph{norm} by $|x|_F=\sqrt{\bar{x}x}$, so that $|\cdot|_F$ coincides with the Euclidean norm on $\R^{d_F}$. More generally, $F^n$ will be equipped with $\scal{x}{y}=\sum_{k=1}^n\bar{x}_k y_k$, so that $\re\left(\scal{x}{y}\right)$ coincides with the Euclidean scalar product of $x$ and $y$ seen as vectors in $\R^{d_F n}$. The corresponding vector norm will be denoted by $\|\cdot\|$.

In all that follows, $G_F$ or more simply $G$ will denote the group $\GF$ of square matrices of size 2 with determinant 1. For a general discussion about the determinant of quaternionic matrices, see \cite{AslakenQuat} and \cite{GelfandQuasidet}. In particular, the equality $\left|\det\nolimits_{\R} u\right|=\left|\det\nolimits_{F} u\right|^{d_F}$ holds for $u$ in $\mathrm{M}_n(F)$. As a consequence, the measure on $F^2$ obtained from the Lebesgue measure under the identification $F^2\simeq\R^{2d_F}$ is $G_F$-invariant and satisfies \[\int_{F^2}f(\lambda x)\,dx = \left|\lambda\right|_F^{-2d_F}\int_{F^2}f(x)\,dx = \int_{F^2}f(x\lambda)\,dx\] for any suitable function $f$ and $\lambda$ in $F^\times$.

The proof of our main result essentially relies on the application of a certain Fourier transform on an appropriate space of functions. Following classical texts, we denote by $\s$ the Schwartz space of rapidly decreasing functions over $\R^n$. Using the identification $F^n\simeq\R^{d_F n}$ again, $\sF$ will denote $\mathcal{S}\left(\R^{d_F n}\right)$. Finally, $\delta_x$ denotes the usual Dirac distribution supported at $x$.

We shall use the following definition of the Fourier transform on $F^n$:
\[\F_{F^n} f(\xi)=\int_{F^n}f(x)e^{-2i\pi\re \left(\scal{x}{\xi}\right)}\,dx,\] for any $f\in\sF$. 
The action of the multiplicative group $F^\times$ by dilations will be denoted in the following way: if $f$ is a Schwartz function, then for any $\alpha\in F^\times$, we define $f^\alpha$ by \[f^\alpha(x)=f(x\alpha)\] for $x\in F^n$. Then, the well-known behaviour of the real Fourier transform under dilations generalises to \begin{equation}\label{FourierdilatF}\F_{F^n} \left(f^\alpha\right) = \left|\alpha\right|^{-d_F n}\left(\F_{F^n} f\right)^{\frac{1}{\bar{\alpha}}}.\end{equation}

\subsection{\texorpdfstring{Structural facts for $\GF$}{Structural facts for SL(2,F)}}\label{StructSL2F}	

This paragraph is devoted to a review of the main facts that will be relevant to the study of the representations of $G_F$.

\subsubsection{Borel subgroups and identifications}

Let $\Theta$ be the Cartan involution defined on $G_F$ by $\Theta(g)={^t}\bar{g}^{-1}$. The subgroup $P_F$ of upper triangular matrices is a Borel subgroup of $G_F$. It admits a Langlands decomposition $P_F = M_F A_F N_F$ where \[M_F=\left\{\mat{m}{0}{0}{m^{-1}}\;,\;|m|_F=1\right\},\] \[A_F=\left\{\mat{a}{0}{0}{a^{-1}}\;,\;a>0\right\},\] \[N_F =\left\{\mat{1}{t}{0}{1}\;,\;t\in F\right\}.\] The \emph{$\Theta$-stable Levi component} $L_F=M_F A_F$ of $P_F$ hence identifies with $F^\times$ and its nilradical $N_F$ with $F$, so that $P_F$ is isomorphic to the semi-direct product $F^\times\ltimes F$.

\begin{notation}We will write $d^\times x$ for the Haar measure on the multiplicative group $F^\times$ defined by \[d^\times x=\frac{dx}{|x|_F^{d_F}},\] where $dx$ denotes the Lebesgue measure on $\R^d_F\simeq F$. \end{notation}

\begin{remark}
As the direct product of a compact group and an abelian one, $L$ is amenable. It follows that $\widehat{L}_r$ coincides with $\widehat{L}$ and $\Csr(L)$ with $\Cs(L)$.
\end{remark}

From now on, the subscript ${}_F$ referring to the field will generally be omitted. The nilradical $\bar{N}$ of the opposite Borel subgroup $\bar P=\Theta\left(P\right)$ also identifies with $F$. More precisely, \[n_t= \mat{1}{t}{0}{1} \qquad\text{and}\qquad \bn_t = \mat{1}{0}{t}{1}\] will denote the elements of $N$ and $\bar{N}$ corresponding to $t$ in $F$. Finally, the Weyl group $W$ consists of two elements. The non-trivial one conjugates $N$ and $\bar{N}$, and is represented modulo $M$ by the matrix $w=\mat{0}{-1}{1}{0}$.

\subsubsection{\texorpdfstring{Topology and measure on $G/N$}{Topology and measure on G/N}}

Writing $c(g)$ for the first column of a matrix $g$ in $G$, we notice that $c(g n)=c(g)$ for any $n$ in $N$ and $c(g_1 g_2)=g_1.c(g_2)$ where $g.x$ denotes the matrix multiplication of the vector $x$ in $F^2$ by $g$. In other terms, $c$ induces a $G$-equivariant homeomorphism between $G/N$ and $F^2\setminus\left\{0\right\}$. It will later be useful to remark that the preimage by $c$ of a non-zero vector $x$ in $F^2$ admits \[\left[\begin{array}{c|c}&\\x&w.\dfrac{\bar{x}}{\|x\|^2}\\&\end{array}\right]\] as a representative in $G$. The (unique up to a constant) $G$-invariant measure on $G/N$ (see \cite{WeilInt}, \cite[Ch.VIII]{KnappBeyond}) corresponds under this identification to the restriction of the Lebesgue measure on the $F$-plane to $F^2\setminus\left\{0\right\}$.

\subsection{Principal series}
An important class of representations of $G$ is obtained by \emph{parabolic induction}, as explained below. \begin{notation} The carrying space of a representation $\pi$ will always be denoted by $\mathcal{H}_\pi$.\end{notation} Let $\sigma$ be a unitary irreducible representation of $M$. If $\nu$ is a purely imaginary complex number, we still write $\nu$ for the character of $A$ defined by $a\mapsto e^{\nu\log a}.$ 

\begin{definition}The \textit{principal series} associated to the parabolic subgroup $P$ consists of the representations of the form \[\pi_P^{\sigma,\nu}=\ind{P}{G}{\sigma\otimes\nu\otimes1_N}.\]\end{definition}

This family is parametrised by the dual $\widehat{L}_r=\widehat{M}_r\times\widehat{A}_r$ of $L$. The Weyl group acts on $\widehat{L}_r$ \textit{via} $w.(\sigma,\nu)=\left(\sigma(w^{-1}\cdot w), -\nu\right)$. Results of F. Bruhat~\cite{Bruhat} show that parameters $(\sigma,\nu)$ in the same orbit under $W$ induce equivalent representations and that $\pi_P^{\sigma,\nu}$ is irreducible if $(\sigma,\nu)$ is not a fixed point under $W$: the principal series representations are said to be \textit{generically irreducible}. 

The problem of determining if reducibility indeed occurs at the Weyl-fixed points was dealt with by Knapp and Stein in \cite{KS1}. Candidates for intertwiners appear naturally as integral operators formally satisfying the intertwining relations. However, these operators are given by non-locally integrable kernels. The central object in this theory is the \textit{standard intertwining integral} associated to the element $w$, formally defined by the formula \begin{equation}I_w^{\sigma,\nu}\,f(g) = \int_{\N} f(xw\bn)\,d\bn\label{intw}\end{equation} where $f$ is a function in a dense subspace of $\mathcal{H}_{\pi_P^{\sigma,\nu}}$. Knapp and Stein then proceed in two steps to construct intertwiners. The first consists in making sense of (\ref{intw}) by letting $\nu$ take non-purely imaginary values. The operators $I_w^{\sigma,\nu}$ are then defined as meromorphic functions in the complex variable $\nu$. The second step is called \textit{normalisation} and yields unitary self-intertwiners at the reducibility parameters. More precisely, Knapp and Stein construct complex-valued meromorphic functions $\gamma_\sigma$ such that the operator defined by
\begin{equation}\widetilde{I}_w^{\sigma,\nu}=\frac{1}{\gamma_\sigma(\nu)}I_w^{\sigma,\nu}\label{KSnormal}\end{equation} is unitary for $\nu\in i\R$, provided that $\nu$ is not a pole. When defined, the operators $\widetilde{I}_w^{\sigma,0}$, where $w.\sigma=\sigma$, allow to describe the splitting of $\pi_P^{\sigma,0}$ and the coefficients $\gamma_{\sigma}(\nu)$ are related to the densities in Harish-Chandra's Plancherel formula.

\subsection{\texorpdfstring{The Hilbert modules $\E$ and $\Ew$}{The Hilbert modules E and Ew}}

The theory of induced representations of $\Cs$-algebras originated in M.A. Rieffel's seminal work \cite{RieffelIRC*A}. It relies on the use of bimodules over the $\Cs$-algebras of the ambient and inducing group and contains Mackey's theory of induced representations of locally compact groups, seen at the level of group $\Cs$-algebras, as a special case. One advantage of using Hilbert modules to describe the induction process is for instance the neat expression of Mackey's Imprimitivity Theorem in terms of strong Morita equivalence and crossed product.

However, a direct application of Rieffel's theory in the situation depicted in the previous paragraph fails to enclose all of its specificities. More precisely, it produces a $\Cs(P)$-module, whereas the results of Bruhat discussed above indicate that the relevant parameter space for the principal series is the reduced dual of the $\Theta$-stable Levi component $L$ of $P$, modulo the action of $W$. This observation suggests that principal series should be induced by a Hilbert module over $\Cs(L)$. Following this idea, a slight generalisation of Rieffel's construction was obtained in \cite{PThese,PArtmodules}, a special case of which leads to a $\Cs(L)$-module $\mathcal{E}(G/N)$ providing an accurate description of the principal series induced from $P$. We recall here the properties of this object that will be of use in the case at hand. Proofs and details can be found in \cite{PArtmodules}.

\subsubsection{\texorpdfstring{Construction in the case of $\GF$}{Construction in the case of SL(2,F)}}

The space of continuous functions with compact support on a topological space $X$ will be denoted by $C_c(X)$. The Hilbert module $\E=\E(G/N)$ is obtained in general as the completion of $C_c(G/N)$ with respect to certain inner product taking values in $C_c(L)\subset \Cs(L)$. Here, according to the discussion of Paragraph~\ref{NotaF}, it will be constructed from functions on $F^2\setminus\left\{0\right\}$.

Let $f,g$ be functions on $F^2$ and $\alpha$ an element of $F$. We define \begin{equation}\scal{f}{g}(\alpha) = |\alpha|^{d_F}\int_{F^2}\overline{f(x)}g(x \alpha)\,dx\label{scal}\end{equation} whenever it makes sense. Proposition~\ref{caractE} below will show that functions in $\sdx$ define elements in $\E$.

Further, we let $L$ act by
\begin{equation}
f.l = |l|^{-d_F}f^{l^{-1}}\label{actL}
\end{equation}
for $l\in F^\times\simeq L$. This formula integrates to \begin{equation}\left(f.\varphi\right)(x)=\int_{F^\times}f(xl^{-1})\varphi(l)|l|^{-d_F}\,d^\times l\label{actCcL}\end{equation} for $f$ suitable, $x$ in $F^2$ and $\varphi$ in $C_c(L)\subset\Cs(L)$.

\begin{definition}\label{GC*G}
The Hilbert module $\E$ over $\Cs(L)$ is obtained from $C_c(F^2\setminus\left\{0\right\})$ by extending the action (\ref{actCcL}) to $\Cs(L)$, then completing with respect to the norm induced by (\ref{scal}). 
\end{definition}

As a reflection of the influence of the Weyl group, $\E$ carries another Hilbert module structure over $\Cs(L)$. More precisely, the non-trivial Weyl element $w$ acts by conjugation on $C_c(L)$: for $\varphi\in C_c(L)$, we denote $\varphi^w:l\mapsto\varphi(w^{-1}lw)=\varphi(l^{-1})$. This action extends to an involutive automorphism $a\mapsto a^{w}$ of $\Cs(L)$, which induces a new $\Cs(L)$-Hilbert module structure on $\E$ in the following way: for $\xi,\eta\in\E$ and $a\in \Cs(L)$, we define

\begin{equation}\xi._{w}a = \xi.a^w\end{equation}

\begin{equation}\scal{\xi}{\eta}_w = \left(\scal{\xi}{\eta}\right)^w.\end{equation}

\begin{notation} The module thus obtained will be denoted by $\Ew$.\end{notation}

The module $\Ew$ can also be introduced in the same way as $\E$ in Definition~\ref{GC*G} by replacing (\ref{scal}), (\ref{actL}) and (\ref{actCcL}) respectively by

\begin{equation}\scal{f}{g}_w(l) = \scal{f}{g}(l^{-1}) = |l|^{-d_F}\int_{F^2}\overline{f(x)}g(l^{-1} x)\,dx,\label{scalw}\tag{\ref{scal} $w$}\end{equation}
\begin{equation}f._{w}l = |l|^{d_F}f^l,\label{actLw}\tag{\ref{actL} $w$}\end{equation}
\begin{equation}\left(f._{w}\varphi\right)(x)=\int_{F^\times}f(xl^{-1})\varphi(l)|l|^{d_F}\,d^\times l.\label{actCcLw}\tag{\ref{actCcL} $w$}\end{equation}

Beside the right $\Cs(L)$-module structure, both $\E$ and $\Ew$ carry an action by linear bounded (in fact compact) operators of $\Cs(G)$, that factorises through $\Csr(G)$. This action is first defined at	the level of $G$ by
\begin{equation}\label{actG}g.f = f\left(g^{-1}\cdot\right),\end{equation}
then integrated to compactly supported functions on $G$ and finally extended to $\Cs(G)$.

\begin{remark}An interesting problem is to eventually characterise the image of the morphism \[\Csr(G)\longrightarrow\mathcal{K}_{\Cs(L)}\left(\E(G/N)\right),\] for it is expected to be complemented and help understand the structure of $\Csr(G)$ in relation with the representation theory of $G$, as advocated in \cite{PArtmodules}.\end{remark}

\subsubsection{\texorpdfstring{Characterisation of functions in $\E$}{Characterisation of functions in E}}

In what follows, it will be convenient to realise Schwartz functions on $F^2$ as elements in $\E$ and $\Ew$.

\begin{proposition}\label{caractE}
The inclusion maps \[\iota:C_c^\infty\left(F^2\setminus\left\{0\right\}\right)\longrightarrow\E\qquad\text{and}\qquad\iota_w:C_c^\infty\left(F^2\setminus\left\{0\right\}\right)\longrightarrow\Ew\] extend by continuity to $\sdx$.
\end{proposition}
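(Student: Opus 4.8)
The statement is that the inclusions $\iota, \iota_w$ of $C_c^\infty(F^2 \setminus \{0\})$ into $\E$ (resp. $\Ew$) extend continuously to the Schwartz space $\sdx$. Let me think about what's involved.

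We need to show that a Schwartz function $f$ on $F^2$ defines an element of $\E$. The Hilbert module norm of $f$ comes from the $C_c(L)$-valued inner product (\ref{scal}): $\|f\|_\E^2 = \|\scal{f}{f}\|_{\Cs(L)}$. So we need to understand $\scal{f}{f}$ as an element of $\Cs(L) = \Cs(F^\times)$.

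The key computation: for $\alpha \in F^\times$,
$$\scal{f}{f}(\alpha) = |\alpha|^{d_F} \int_{F^2} \overline{f(x)} f(x\alpha)\, dx.$$

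This is a function on $F^\times \simeq F \setminus \{0\}$. The question is whether it extends to a function in $C_c(F^\times)$ — or more precisely, whether it's in (or approximable in) $\Cs(F^\times)$. Since $F^\times$ is not compact, we cannot expect $\scal{f}{f}$ to have compact support. Rather, the relevant observation is that $\Cs(F^\times)$, via the Fourier/Gelfand transform, is the algebra of continuous functions vanishing at infinity on $\widehat{F^\times}$, and we need $\scal{f}{f}$ to define (via its action as a convolution operator, or via the isomorphism) an element there.

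**Approach.** The cleanest route: identify $\Cs(F^\times)$ concretely. Since $F^\times \simeq \R^\times_+ \times M$ (polar coordinates), and $\Cs$ of this is a $C_0$ function algebra on the dual. Then compute the "symbol" of $\scal{f}{f}$ — which will essentially be a Fourier transform in the $\R^\times_+$ direction (a Mellin transform) and a Fourier expansion on the compact factor $M$. One shows this symbol is continuous and vanishes at infinity by using the Schwartz decay of $f$ and dominated convergence. For the $C_c^\infty(F^2 \setminus \{0\})$ case this is clear; the extension to $\sdx$ follows because Schwartz functions are approximated in a strong enough topology by $C_c^\infty(F^2 \setminus \{0\})$ functions — but note this approximation must be *uniform away from zero in a controlled way*, since the inner product integral has a potential singularity issue near $x = 0$ only through the factor $|\alpha|^{d_F}$ being unbounded as $\alpha \to \infty$, not near $0$.

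**Key steps in order:**

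1. Reduce to showing the map $f \mapsto \scal{f}{f}$ (equivalently the sesquilinear $f, g \mapsto \scal{f}{g}$) is continuous from $\sdx$ (Schwartz topology) to $\Cs(L)$ (operator norm / $C_0$ norm). By density of $C_c^\infty(F^2 \setminus \{0\})$ in $\sdx$ and the fact that $\iota$ is already an isometry-like embedding on the dense subspace, continuity of this quadratic map gives the extension. Actually one should show: (a) $\scal{f}{g} \in \Cs(L)$ for Schwartz $f,g$, and (b) the estimate $\|\scal{f}{g}\|_{\Cs(L)} \le C \cdot p(f) p(g)$ for some Schwartz seminorm $p$.

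2. Make the change of variables in (\ref{scal}): write $x\alpha$, use the $F^\times$-invariance of Lebesgue measure on $F^2$ recalled in the Notations, and pass to polar-type coordinates adapted to $F^\times \ltimes$-structure, so that $\scal{f}{g}(\alpha)$ becomes visibly a matrix coefficient of the regular representation of $F^\times$ applied to an $L^1$ (in fact better) function built from $f$ and $g$.

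3. Concretely: show $\scal{f}{g}$, as a function on $F^\times$, lies in $L^1(F^\times, d^\times l) \cap C_0$ with $L^1$-norm controlled by Schwartz seminorms — then since $L^1(F^\times) \hookrightarrow \Cs(F^\times)$ contractively, we are done. The $L^1$ bound: $\int_{F^\times} |\alpha|^{d_F} \left|\int \overline{f(x)} g(x\alpha)\, dx\right| \frac{d\alpha}{|\alpha|^{d_F}} \le \int_{F^\times}\int_{F^2} |f(x)||g(x\alpha)|\, dx\, d\alpha$; substituting $y = x\alpha$ when $x \neq 0$ and using a coarea/fibration argument over the $F^\times$-orbits in $F^2 \setminus \{0\}$, bound this by $\left(\int |f(x)| w_1(x)\,dx\right)\left(\int |g(y)| w_2(y)\,dy\right)$ with weights $w_i$ that are integrated against Schwartz functions finitely — the decay of $f, g$ at infinity and the mild behavior near $0$ (the orbit space $F^2\setminus\{0\} / F^\times \simeq$ sphere is compact, so no singularity) making everything finite.

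4. For $\Ew$: identical, using (\ref{scalw}) in place of (\ref{scal}); the only change is $|l|^{d_F} \leftrightarrow |l|^{-d_F}$, which is harmless since we integrate against $d^\times l$ and the substitution $l \mapsto l^{-1}$ exchanges the two forms.

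**Main obstacle.** The genuine difficulty is step 3: controlling $\scal{f}{g}$ near the "ends" of $F^\times$ — i.e., as $|\alpha|_F \to 0$ and as $|\alpha|_F \to \infty$ — by Schwartz seminorms of $f$ and $g$ uniformly. As $\alpha \to \infty$, the factor $|\alpha|^{d_F}$ grows, but $g(x\alpha)$ is supported (effectively, by decay) near $x = 0$, and one must trade the growth against the decay of $g$ at infinity *and* the behavior of $f$ near $0$; as $\alpha \to 0$, symmetrically. Getting a clean $L^1(F^\times)$ estimate — or directly a $C_0$-norm estimate via Mellin transform — with explicit Schwartz-seminorm control, while keeping track of the quaternionic normalisations $d_F$ and $|\cdot|_F$ correctly, is the technical heart. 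Everything else (density of $C_c^\infty(F^2\setminus\{0\})$ in $\sdx$, the contractive inclusion $L^1(F^\times) \hookrightarrow \Cs(F^\times)$, transferring to $\Ew$) is routine.
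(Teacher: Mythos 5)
Your plan is essentially the same as the paper's: both reduce the problem to showing $\scal{f}{g}\in L^1(F^\times, d^\times l)$ and then invoke the contractive inclusion $L^1(F^\times)\hookrightarrow \Cs(L)$. The paper obtains the $L^1$ control more simply than your fibration sketch, via the two-sided pointwise bound $\left|\scal{h}{h}(l)\right|\leq\left\|h\right\|_\infty\left\|h\right\|_1\,\mathrm{min}\left(\left|l\right|^{d_F},\left|l\right|^{-d_F}\right)$ (just bound one factor by $\|h\|_\infty$ and change variables, in each of the two orders), and then packages the extension as a Cauchy-sequence argument with smooth annular truncations $f_n=f\chi_n$ rather than a direct Schwartz-seminorm continuity estimate.
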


\begin{proof}
Let $f$ be in $\sdx$ and consider the family of truncations $\left\{f_n\right\}_{n\geq1}$ defined by $f_n=f.\chi_n$, where for any integer $n$, the function $\chi_n$ is chosen on $F^2\setminus\left\{0\right\}$ to be \begin{itemize}\item smooth,\item compactly supported,\item vanishing on a neighbourhood of the origin,\item constantly equal to 1 on the annulus $\left\{\frac{1}{n}\leq\|x\|\leq n\right\}$.\end{itemize} 
\begin{center}
\begin{pspicture}(-.5,-.5)(6.5,3.9)
\psaxes[ticks=none, linewidth=0.01, labels=none]{->}(0,0)(-0.5,-0.5)(6,3.5)
\psline[linewidth=0.04](0,0)(0.8,0)
\psline[linewidth=0.04](5,0)(5.6,0)
\psline[linewidth=0.04](1.6,2.8)(4.2,2.8)
\psline[linewidth=0.04,linearc=0.21](0.8,0)(0.9,0.05)(1,0.1)(1.4,2.7)(1.5,2.75)(1.6,2.8)
\psline[linewidth=0.04,linearc=0.20](4.2,2.8)(4.3,2.75)(4.4,2.7)(4.8,0.1)(4.9,0.05)(5,0)
\psline[linestyle=dotted](1.6,0)(1.6,2.8)
\psline[linestyle=dotted](4.2,0)(4.2,2.8)
\psline[linestyle=dotted](0,2.8)(4.2,2.8)
\psline(1.6,0)(1.6,-.08)
\psline(4.2,0)(4.2,-.08)
\psline(0,2.8)(-0.08,2.8)
\uput[0](1.2,-0.4){$\frac{1}{n}$}
\uput[0](4.1,-0.3){$n$}
\uput[0](-0.5,2.8){$1$}
\uput[0](6.1,-0.05){$\|x\|$}
\uput[0](2.4,1.4){$\chi_n$}
\end{pspicture}
\end{center}

For any $n$, the truncated function $f_n$ is compactly supported on $F^2\setminus\left\{0\right\}$. We will prove that $\scal{f-f_n}{f-f_n}$ converges to 0 with respect to the norm of $\Cs(L)$, so that $\left\{f_n\right\}_{n\geq1}$ is a Cauchy sequence in $\E$, with limit $f$.

Let us first observe that a straightforward change of variables implies that \begin{equation}\label{scalcsr}\left|\scal{h}{h}(l)\right|\leq\left\|h\right\|_\infty\left\|h\right\|_1\mathrm{min}\left(\left|l\right|^{d_f},\left|l\right|^{-d_F}\right)\end{equation}
for any $h$ in $\sdx$ and $l$ in $F^\times$. It follows that $\varphi_n=\scal{f-f_n}{f-f_n}$ is integrable on $L$ hence belongs to $\Cs(L)$. To prove that $\varphi_n$ converges to $0$ in $\Cs(L)$, it is enough to prove that $\left\|\varphi_n\right\|_1$ converges $0$, which is a consequence of (\ref{scalcsr}) and the definition of $f_n$:\[\left\|\varphi_n\right\|_1\leq C\left\|f-f_n\right\|_\infty\left\|f-f_n\right\|_1\leq C'\frac{\left\|f\right\|_\infty^2}{n},\] where $C$ and $C'$ are constants, which concludes the proof. The statement relative to $\Ew$ is proved along the exact same lines using formulas (\ref{scalw}) and (\ref{actLw}).\end{proof}

\subsubsection{Motivation}

The first feature of Rieffel's modules consists in the way they implement the induction functor by taking tensor products. In a more Lie-theoretic context, $\E$ plays the same role with respect to \emph{parabolic induction}: as proved in \cite{PArtmodules}, for any $\sigma,\nu\in\widehat{L}_r$, there is a unitary map \[\E\otimes_{\Cs(L)}\mathcal{H}_{\sigma\otimes\nu}\stackrel{\sim}{\longrightarrow}\mathcal{H}_{\pi_P^{\sigma,\nu}}\] that intertwines the left actions of $\Csr(G)$ on both Hilbert spaces. In this sense, the Hilbert module $\E$ is considered to globally enclose the principal series representations, hence the name \emph{universal principal series} sometimes used for an analogous object in the $p$-adic situation.

As an example of the way in which classical properties of principal series reflect at the level of Hilbert modules, it was proved in \cite{PArtmodules} that the commutant of $\Csr(G)$ in $\mathcal{L}_{\Cs(L)}\left(\E\right)$ reduces to the center of the multiplier algebra of $\Cs(L)$, acting by right multiplications. These operators correspond to the homotheties of the module $\E$, so that the result may be interpreted as a generic irreducibility statement in this global context.

The next step in the theory, and the purpose of the following section, consists in constructing analogues of Knapp-Stein intertwiners at the level of Hilbert modules.

\section{Intertwining operators}\label{IntertwineSect}

\subsection{Standard intertwining integrals}

The starting point of Knapp and Stein's theory of intertwining operators is the observation that, although it does not converge, the integral formula~(\ref{intw}) formally satisfies intertwining relations. The point of what follows is to study this integral on a subspace of $\E$ and \emph{normalise} it in an appropriate sense in the Hilbert module context, to be made precise in Paragraph~\ref{Normalprinciple}.

The first step was obtained in \cite{PArtmodules}, where the convergence of \begin{equation}\Iw f(g) = \int_{\N} f(xw\bn)\,d\bn\label{Iw}\end{equation} is established for any compactly supported continuous function $f$ on $G/N$. This integral is the analogue of the standard operator $I_w^{\sigma,\nu}$ in Knapp-Stein theory. However, although $\Iw$ defines a map $C_c(G/N)\longrightarrow C(G/N)$, it does not extend to $\E(G/N)$, the obstruction being concentrated in a certain distribution $T_w$, discussed in the appendix.

To conclude this paragraph, let us write a concrete expression for the operator $\Iw$ in the case of $\GF$. With the notations of Paragraph~\ref{StructSL2F}, for $x\in F^2\setminus\left\{0\right\}$ and $t\in\R$, \[\left[\begin{array}{c|c}&\\x&w.\dfrac{\bar{x}}{\|x\|^2}\\&\end{array}\right] w\bn_t = \left[\begin{array}{c|c}&\\-xt + w.\dfrac{\bar{x}}{\|x\|^2}&-x\\&\end{array}\right]\]
so that (\ref{Iw}) becomes \begin{equation}\Iw f(x) = \int_F f\left(tx + w.\frac{\bar{x}}{\|x\|^2}\right)\,dt.\label{IwSL2R}\end{equation}

\subsection{\texorpdfstring{$\Cs$-algebraic normalisation: principle}{C*-algebraic normalisation: principle}}\label{Normalprinciple}

In view of the classical theory of Knapp and Stein, the normalisation process should associate to $\Iw$ a unitary operator of Hilbert modules, satisfying some linearity properties with respect to the left and right actions of $\Csr(G)$ and $\Cs(L)$ respectively.

Regarding $\Cs(L)$-linearity, an easy computation shows that the operator \[\Iw:C_c(G/N)\longrightarrow C(G/N)\] satisfies \[\Iw(f.\varphi)=\Iw(f).\varphi^w=\Iw(f)._{w}\varphi\] for $\varphi\in C_c(L)$, notations (\ref{actCcL}) and (\ref{actCcLw}) still making sense for $\Iw(f)$ in $C(G/N)$.

Finally, considering the normalisation relation (\ref{KSnormal}) in the classical case and taking into account the previous discussion, one is lead to require the following properties for an operator to be considered as a normalisation of $\Iw$.

\begin{definition}\label{normaldef}
An operator $\Uw$ shall be said to \textit{normalise} $\Iw$ if:
\begin{enumerate}[($i$)]
\item $\Uw$ is an isometry of Hilbert modules over $\Cs(L)$ between $\Ew$ and $\E$, that commutes to the left action of $\Csr(G)$;
\item there exists a continuous function $\gamma:L\longrightarrow\C$ such that the composition $\Iw\circ\Uw$ acts on a dense subspace of functions in $\Ew$ by right convolution with $\gamma$ over $L$.
\end{enumerate}
\end{definition}

The situation is illustrated by the following commutative diagram, in which the dense subspace of functions mentioned in the above definition is denoted by $\Ew_0$, and $\E_0$ is a subspace of $\E$ to which $\Iw$ extends in order to make the composition $\Iw\circ\Uw$ well-defined.	


\[\xymatrix @R=2mm @C=15mm {C_c(G/N)\ar[r]^-{\Iw}&C(G/N)\\\cap&\\ \E_0\ar@{-->}[uur]^{\Iw}&\Ew_0\ar[l]^{\Uw\left|_{\Ew_0}\right.}\ar[uu]_{c_\gamma}\\\cap&\cap\\\E&\Ew\ar@<0.5ex>@{-->}[l]^{\Uw}}\]

\subsection{Main result}\label{thmsect}

This paragraph is devoted to an explicit construction in the case of $\GF$ of an operator $\Uw$ together with a function $\gamma$ satisfying the properties of Definition~\ref{normaldef}. The proof will show that the space $\sdx$, seen as a submodule of $\E$ and $\Ew$ according to Proposition~\ref{caractE}, can be used as $\E_0$ and $\Ew_0$. Let $\Fw$ be the operator defined on $\mathcal{S}(F^2)$ by \begin{equation}\Fw f(x)=\int_{F^2}e^{-2i\pi\re\left(\scal{w.y}{\bar{x}}\right)}f(y)\,dy = \F_{F^2} f(w^{-1}.\bar{x}).\label{Fw}\end{equation}

\begin{theorem}\label{thm}
The operator $\Fw$ extends to an operator $\Uw$ on $\E^w$ such that:
\begin{enumerate}[(1)]
\item $\Uw$ normalises $\Iw$,
\item the corresponding function $\gamma$ is given on $L\simeq F^\times$ by \[\gamma(l)=e^{-2i\pi\re \left(l^{-1}\right)}.\]
\end{enumerate}
\end{theorem}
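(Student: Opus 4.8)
The plan is to verify the two conditions of Definition~\ref{normaldef} in turn, beginning with the isometry property. Using formula~(\ref{Fw}), $\Fw f = \F_{F^2}f \circ (w^{-1}.\overline{\;\cdot\;})$, so the module inner product $\scal{\Fw f}{\Fw g}(\alpha)$ is, up to the Jacobian $|\alpha|^{d_F}$, an integral of $\overline{\F_{F^2}f(w^{-1}.\bar x)}\,\F_{F^2}g(w^{-1}.\overline{x\alpha})$ over $F^2$. The map $x\mapsto w^{-1}.\bar x$ is measure-preserving (it is the composition of the isometric conjugation with the unimodular action of $w$), so after this change of variable one is left with $|\alpha|^{d_F}\int_{F^2}\overline{\F_{F^2}f(\xi)}\,\F_{F^2}g(\xi\bar\alpha)\,d\xi$, where the bar on $\alpha$ comes from commuting the conjugation past the dilation. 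Now the key is Plancherel on $F^2\simeq\R^{2d_F}$ combined with the dilation formula~(\ref{FourierdilatF}): writing $\F_{F^2}g(\xi\bar\alpha) = \F_{F^2}\big((g^{\bar\alpha})\big)(\xi)\cdot|\bar\alpha|^{2d_F}$ inverted appropriately, the right-hand side should collapse to $|\alpha|^{-d_F}\int_{F^2}\overline{f(y)}\,g(y\alpha^{-1})\,dy$ — which is exactly $\scal{f}{g}_w(\alpha)$ as given by (\ref{scalw}). So $\Fw$ preserves the $w$-twisted inner product on the dense submodule $\sdx\subset\Ew$, and by Proposition~\ref{caractE} it extends to an isometry $\Uw:\Ew\to\E$. (One should check the image actually lands in $\E$, i.e. again apply Proposition~\ref{caractE} on the target side.)

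Next I would check that $\Uw$ commutes with the left $G$-action. The action~(\ref{actG}) on functions on $F^2\setminus\{0\}$ is $g.f = f(g^{-1}\cdot)$, and since the Fourier transform on $F^2$ intertwines the action of $g$ with that of its inverse transpose-conjugate (because $\re\scal{\cdot}{\cdot}$ is the Euclidean pairing), one needs the extra twist $y\mapsto w.y$ and $x\mapsto\bar x$ in (\ref{Fw}) to be precisely what converts $\Theta(g)={}^t\bar g^{-1}$ back into $g$. Concretely: $\Fw(g.f)(x) = \F_{F^2}f\big(\Theta(g)^{-1}.(w^{-1}.\bar x)\big)$ and one wants this to equal $(g.\Fw f)(x) = \F_{F^2}f\big(w^{-1}.\overline{g^{-1}.x}\big)$; since $\overline{g^{-1}.x} = \bar{g}^{-1}.\bar x$ and $w^{-1}\Theta(g)^{-1} = w^{-1}\,{}^t\bar g$, the identity reduces to the structural relation $w^{-1}\,{}^t\bar g\, = \bar g^{-1} w^{-1}$ modulo $M$, i.e. a routine $\mathrm{SL}(2)$ computation with $w=\mat{0}{-1}{1}{0}$. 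Once verified on $G$, it extends to $C_c(G)$ and then to $\Csr(G)$ by continuity, giving condition~($i$).

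For condition~($ii$), I would compute $\Iw\circ\Uw$ directly on $f\in\sdx$ using the explicit formula~(\ref{IwSL2R}): $\Iw(\Fw f)(x) = \int_F \Fw f\big(tx + w.\tfrac{\bar x}{\|x\|^2}\big)\,dt$. Substituting the definition of $\Fw$ and interchanging the $t$-integral with the $y$-integral produces a factor $\int_F e^{-2i\pi\,t\,\re\scal{w.y}{\bar x}}\,dt$, which is the Fourier transform in $t$ of the constant $1$, hence the Dirac mass $\delta$ in the real variable $\re\scal{w.y}{\bar x}$ — this is where the normalisation miracle happens, as the divergent integral in $\Iw$ and the oscillatory kernel in $\Fw$ combine into a delta that localises $y$ onto a hyperplane. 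Carrying out that localisation and the remaining phase $e^{-2i\pi\re\scal{w.y}{\,w.\overline{\bar x}/\|x\|^2}}$ should, after the change of variables $c(g)\leftrightarrow x$ and a computation of the relevant one-dimensional Jacobian, reveal that $\Iw\circ\Uw$ acts on $\sdx$ as right convolution over $F^\times$ by the function $\gamma(l)=e^{-2i\pi\re(l^{-1})}$. The main obstacle I anticipate is precisely this last computation: making the formal manipulation with $\int_F e^{-2i\pi t s}\,dt = \delta(s)$ rigorous on the Schwartz submodule, tracking the conjugations and the $\|x\|$-dependence through the delta-localisation, and recognising the outcome as convolution by $\gamma$ in the multiplicative variable rather than some other operation — the earlier displayed matrix identity $\left[\begin{array}{c|c}&\\x&w.\tfrac{\bar x}{\|x\|^2}\\&\end{array}\right]w\bn_t$ and the $L$-action formulas (\ref{actCcL})/(\ref{actCcLw}) will be the tools that turn the phase into the stated exponential.
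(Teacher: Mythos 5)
Your plan is essentially the proof in the paper: Plancherel together with the dilation identity~(\ref{FourierdilatF}) for the isometry, the relation $wgw^{-1}={}^t g^{-1}$ in $\GF$ for $\Csr(G)$-equivariance, and the exchange of the $t$- and $y$-integrals producing a Dirac mass for the composition $\Iw\circ\Fw$. For the step you flag as the obstacle, the paper proceeds exactly as you outline: the delta $\delta_0\left(\scal{w.\widetilde{y}}{\widetilde{\bar{x}}}\right)$ localises $y$ to $\bar{x}^\perp = F\cdot x$, and parametrising $y=\lambda\widetilde{x}$ turns the remaining phase into $e^{-2i\pi\re(\lambda)}$ and the residual integral into a multiplicative convolution over $F^\times$, yielding $\gamma(l)=e^{-2i\pi\re(l^{-1})}$.
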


\begin{proof}
In view of formulas~(\ref{actCcL}) and (\ref{actCcLw}), linearity with respect to $\Cs(L)$ reduces to equivariance with respect to $L\simeq F^\times$, which in turn follows directly from the definitions (\ref{actL}) and (\ref{actLw}) together with the property (\ref{FourierdilatF}) of the Fourier transform. In the same way, $\Csr(G)$-linearity can be tested using (\ref{actG}). More precisely, a straightforward computation leads to \[\Fw (g.f)=(w\;{}^tg\;w^{-1}).\Fw(f)\] for $g$ in $G$ and $f$ in $\sdx$. Since the relation $wgw^{-1} = {}^t g^{-1}$ holds for any $g$ in $\GF$, it follows that $\Fw$ commutes to the action of $\Csr(G)$.

Let $f\in\sdx$. Another consequence of (\ref{FourierdilatF}) is the following relation: \begin{equation}\label{Fwdilat}\left(\Fw f\right)^l(x)=|l|^{-2d_F}\Fw\left(f\right)^{l^{-1}}(x).\end{equation} Then, given a fixed $l\in F^\times$,
\begin{align*}
\scal{\Fw f}{\Fw f}(l) & = |l|^{d_F}\scal{\Fw f}{\left(\Fw f\right)^l}_{L^2}\\
& = |l|^{d_F}|l|^{-2d_F}\scal{\F_{F^2} f}{\F_{F^2}\left(f^{l^{-1}}\right)}_{L^2} & \text{by (\ref{Fwdilat})}\\
& = |l|^{-d_F}\scal{f}{f^{l^{-1}}}_{L^2}&\text{by the Plancherel formula on $\R^{2d_F}$}\\
& = \scal{f}{f}_w(l).
\end{align*}

Starting with a function $f$ in $\sdx$, which is stable under Fourier transform, Proposition \ref{caractE}, together with the above computation proves that $\Fw$ maps $\Ew$ to $\E$ isometrically.

Let us now compute the composed map $\Iw\circ\Fw$. It follows from (\ref{IwSL2R}) that $\Iw$ is defined by the kernel

\begin{equation}K_{\mathcal{I}}(x,z) = \int_F\delta_0\left(z-tx - w.\frac{\bar{x}}{\left\|x\right\|^2}\right)\,dt.\label{keriw}\end{equation}

On the other hand, (\ref{Fw}) implies that the kernel $K_w$ defining $\Fw$ is \begin{equation}K_w(z,y) = e^{-2i\pi\re\left(\scal{w.y}{\bar{z}}\right)}.\label{kerfw}\end{equation}

In view of formulas (\ref{keriw}) and (\ref{kerfw}), the composition $\Iw\circ\Fw$ is defined by the kernel $K$: \[\Iw\circ\Fw f(x)=\int_{F^2} K(x,y)f(y)\,dy\] where \[K(x,y)=\int_{F^2}K_{\mathcal{I}}(x,z)K_w(z,y)\,dz,\] that is \[K(x,y) = \int_{F^2}\int_F e^{-2i\pi\re\left(\scal{w.y}{z}\right)}\delta_0\left(z-tx - w.\frac{\bar{x}}{\left\|x\right\|^2}\right)\,dt\,dz.\]

For any vector $x\neq0$ of the $F$-plane, we denote by $\widetilde{x}$ its projection on the Euclidean sphere, so that $x = \widetilde{x}\|x\|$. Then,

\begin{eqnarray*}
K(x,y) & = & \int_F e^{-2i\pi\re\left(\scal{w.y}{\bar{t}\bar{x} + w.\frac{x}{\left\|x\right\|^2}}\right)}\,dt\\
& = & e^{-2i\pi\re\left(\scal{w.y}{w.x}\right)\|x\|^{-2}}\int_F e^{-2i\pi\re \left(\scal{w.y}{\bar{x}}\bar{t}\right)}\,dt\\
& = & \dfrac{e^{-2i\pi\re\left(\scal{y}{x}\right)\|x\|^{-2}}}{\|x\|^{d_F}\|y\|^{d_F}}\F_F\left(\boldsymbol{1}_F\right)\left(\scal{w.\widetilde{y}}{\widetilde{\bar{x}}} \right)
\end{eqnarray*}
so that \[K(x,y) = \frac{e^{-2i\pi\re\left(\scal{y}{x}\right)\|x\|^{-2}}}{\|x\|^{d_F}\|y\|^{d_F}}\delta_0\left(\scal{w.\widetilde{y}}{\widetilde{\bar{x}}}\right).\]

The distribution obtained by fixing a vector $x$ in the kernel $\delta_0\left(\scal{w.\widetilde{y}}{\widetilde{\bar{x}}}\right)$ sends a Schwartz function $\varphi$ to its integral on the subspace $\bar{x}^\perp$ orthogonal to $\bar{x}$: \[\delta_0\left(\scal{\cdot}{\widetilde{x}}\right)\varphi = \int_{\bar{x}^\perp}\varphi.\]


Since $\bar{x}^\perp$ is the $F$-line generated by $x$, one has: \begin{eqnarray*}
\Iw\circ\Fw f(x) & = & \|x\|^{-d_F}\int_{F.x}e^{-2i\pi\re\left(\scal{y}{x}\right)\|x\|^{-2}}f(y)\frac{dy}{\|y\|^{d_F}}\\
& = & \|x\|^{-d_F}\int_{F}e^{-2i\pi\re\left(\scal{\lambda\widetilde{x}}{x}\right)\|x\|^{-2}}f(\lambda\widetilde{x})\frac{d\lambda}{|\lambda|^{d_F}}\\
& = & \|x\|^{-d_F}\int_{F}e^{-2i\pi\re\left(\lambda\right)\|x\|^{-1}}f(\lambda\widetilde{x})\frac{d\lambda}{|\lambda|^{d_F}}\\
& = & \int_{F^\times}e^{-2i\pi\re\left(\lambda\right)}f(\lambda x)\,d^\times\lambda,
\end{eqnarray*}
which proves that $\Iw\circ\Fw f$ is equal to $f*_{F^\times}\gamma$, where $\gamma$ is the expected function.\end{proof}

A normalisation result has already been obtained for $\mathrm{SL}(2,\R)$ and $\mathrm{SL}(2,\C)$ in the author's thesis \cite{PThese} by means of functionnal calculus and differential operators. However, besides including the quaternionic case, it seems to us that the point of view adopted in this article is more natural, for it relies on a more geometric interpretation of the space $G/N$. The approach to Knapp-Stein operators as geometric integral transforms seems to originate in the work of A. Unterberger (see \cite{Unterbergerlivre,PevznerUnter}) in a context of geometric analysis. It has been promoted in various recent works in representation theory, especially in the case of degenerate principal series. For instance, they were obtained \textit{via} Radon transforms on light cones in \cite{KobayashiMano_Schrodinger}, understood in terms of the so-called $\mathrm{Cos}^\lambda$ transformations in \cite{PasquOlaf} and normalised by symplectic Fourier transforms in \cite{Kobayashi_small_GL} and \cite{PArtSpnC}.

\section{Appendix: residual intertwining distribution and reducibility}\label{Appendix}

The purpose of this rather independant part is to suggest that the information about reducibility points in the principal series contained in the normalising factors of the classical Knapp-Stein operators can be extracted from the non-normalised operators $\Iw$. We only consider the cases of $F=\R$ and $F=\C$, in which the principal series are particularly easily described, although distinct phenomena regarding reducibility already occur.

\subsection{Structure and notation}

We retain the notations of Paragraph~\ref{StructSL2F}, so that $M_\R\simeq\left\{\pm1\right\}$ and $M_\C\simeq\mathrm{U}(1)$. The properties of the Bruhat decomposition imply that $G\setminus \N MAN$ has Haar measure $0$. Almost every element $g\in G$ thus admits a unique decomposition \[g=\n(g)\m(g)\ab(g)n_g=\n(g)\lm(g)n_g\] according to $\N MAN$, which is called \emph{the open Bruhat cell}. See \cite[p.513]{KS1} for general properties of the projections $\n$, $\ab$ and $\lm$. In the case at hand, one has for $g=\mat{a}{b}{c}{d}$ such that $a\neq0$,


\[\begin{array}{lclcl}\m(g) = \dfrac{a}{|a|}&\quad&\ab(g) = |a|&\quad&\n(g) = \dfrac{c}{a}\\\lm(g) = a&&&&n_g = \dfrac{b}{a}.\end{array}\]

\subsection{Open picture and truncation}

The existence of an open Bruhat cell manifests at the Hilbert module level through an isometry \begin{equation}\label{openpict}\E\simeq L^2(\N)\otimes \Cs(L)\end{equation} of $\Cs(L)$-modules, referred to as the \emph{open picture} of $\E$. This isomorphism allows to transport the left $\Cs(G)$-structure on $\E$ naturally coming from the action $G\curvearrowright G/N$, to $L^2(\N)\otimes \Cs(L)$ although $G$ does not act on $\N MA$. A practical consequence of (\ref{openpict}) is that $C_c(\N)\otimes C_c(L)$ may be used as a dense subset of $\E$. The correspondence between this isomorphism and the classical non-compact picture of the principal series is discussed in \cite{PArtmodules}.

Whether in the classical context or in the $\Cs$-algebraic framework, one of the advantages of working in the open picture is that it essentially reduces to some analysis on $\N$, where the action of $A$ by dilations can be quantitatively measured. More precisely, there exists a \textit{norm function} $|\cdot|$ on $\N\setminus\left\{1\right\}$ that satisfies homogeneity properties under dilations by the elements of $A$. In the present case, this function coincides with the usual modulus function on $\R$ or $\C$.

An expression for $\Iw$ in the open picture was obtained in \cite{PThese}. For an elementary tensor $f\otimes\varphi$ and an element $x_0=\bn_0 l_0=\bn_0 m_0 a_0\in\N L$, it is given by

\[\Iw (f\otimes\varphi)(x_0) = \int_{\overline{N}}f(\bn_0\bn)\left[U_{\lm(w^{-1}\bn)}\varphi\right]^w(l_0)\,\frac{d\bn
}{|\bn|}\] where $U_\gamma$ denotes the multiplier associated to an element $\gamma$ in a group $\Gamma$ acting on $\Cs(\Gamma)$ by the formula $U_\gamma.\varphi=\varphi(\gamma^{-1}\cdot)$ for $\varphi\in C_c(\Gamma)$, and $\varphi^w=\varphi\circ c_w$ as previously, $c_w$ denoting the conjugation by $w$.

Let us consider the following decomposition of the integral: \[\mathcal{I}_w = \mathcal{I}_w^0+\mathcal{I}_w^{\infty}+\Rw\] with

\begin{gather*}
\mathcal{I}_w^{\infty}(f\otimes\varphi)(x_0) = \int_{|\bn|>1}f(\bn_0\bn)\left[U_{\lm(w^{-1}\bn)}\varphi\right]^w(l_0)\,\frac{d\bn
}{|\bn|}\\
\mathcal{I}_w^0(f\otimes\varphi)(x_0) = \int_{|\bn|<1}\left(f(\bn_0\bn)-f(\bn_0)\right)\left[U_{\lm(w^{-1}\bn)}\varphi\right]^w(l_0)\,\frac{d\bn
}{|\bn|}\\
\Rw(f\otimes\varphi)(x_0) = f(\bn_0)\int_{|\bn|<1}\left[U_{\lm(w^{-1}\bn)}\varphi\right]^w(l_0)\,\frac{d\bn
}{|\bn|}\label{Rw}\tag{\dag}
\end{gather*}

This truncation will allow to extract what prevents $\Iw$ to define a Hilbert module operator, precisely enough to recover the information that is encoded in the singularities of the Knapp-Stein operators, as the main result of this note shows.

It is proved in \cite{PThese} that $\mathcal{I}_w^{\infty}$ and $\mathcal{I}_w^0$ give densely defined operators between the Hilbert modules $\mathcal{E}$ and $\Ew$. The remaining term $\Rw$ hence concentrates the singularity of the global intertwining integral.

\subsection{Residual distribution} 

The first remark about $\Rw$ is that it acts non-trivially only on the right factor $\Cs(L)$ in the open picture of $\mathcal{E}$. In other terms, it is supported in $L$ seen in $G/N\stackrel{.}{=}\N L$. The following results show that $\Rw$ is characterised by a certain distribution on $L$.

Let us first recall the following lemma, proved in \cite[p.97]{PThese}. We sketch the proof below for the reader's convenience.

\begin{lemma}
Let $T_{w,\bn}$ be the distribution defined on $L$ by \[T_{w,\bn}(\lambda)=\delta_1(c_{w^{-1}}\left(\lm(w^{-1}\bn)^{-1}\right)\lambda).\]
The integral \[T_w=\int_{|\bn|<1}T_{w,\bn}\,\frac{d\bn}{|\bn|}\] then defines a Radon measure on $L$.
\end{lemma}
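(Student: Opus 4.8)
The plan is to show that, after an elementary change of variables, the formal integral defining $T_w$ is plain integration of a test function against a locally bounded density on $L$, which is exactly what makes it a Radon measure.

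First I would make the integrand explicit. Writing $\bn = \bn_t$ with $t \in F$, a direct computation gives $w^{-1}\bn_t = \mat{t}{1}{-1}{0}$, so the formulas for the Bruhat projections recalled above yield $\lm(w^{-1}\bn_t) = t$ whenever $t \neq 0$; by the measure-zero statement for $G \setminus \N MAN$ this covers almost every $\bn$. Since conjugation by $w$ acts on $L \simeq F^\times$ by inversion, one gets $c_{w^{-1}}(\lm(w^{-1}\bn_t)^{-1}) = t$, and therefore $T_{w,\bn_t}$ is, up to a normalising power of $|t|$ dictated by the convention for $\delta_1$, the Dirac mass of $L$ at the point $t^{-1}$.

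Next I would pair $T_w$ with a test function $\varphi \in C_c(L)$. Unfolding the definition and using the previous identification, $\langle T_w, \varphi\rangle$ takes the form $\int_{|t|<1}|t|^{\alpha}\varphi(t^{-1})\,dt$ for an explicit exponent $\alpha$ depending only on $d_F$ and on the normalisations of $\tfrac{d\bn}{|\bn|}$ and of $\delta_1$. The substitution $\bn \mapsto \bn^{-1}$, i.e. $s = t^{-1}$ on $F^\times \simeq L$, whose Jacobian is again a power of $|s|$, rewrites this as $\int_{\{|l|>1\}} \varphi(l)\,|l|^{\beta}\,dl$ for some $\beta$ (one finds $\beta = 0$ for $F = \R$ and $\beta = -1$ for $F = \C$ with the conventions in force). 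The density $|l|^{\beta}$ is continuous off the unit sphere and bounded on $\{|l|>1\}$, hence locally integrable on $L$; consequently $\varphi \mapsto \langle T_w,\varphi\rangle$ is a well-defined positive linear functional on $C_c(L)$, which by the Riesz representation theorem is given by a Radon measure on $L$, supported on the exterior of the unit ball of $L \simeq F^\times$.

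I expect the only real obstacle to be bookkeeping: keeping track of the Haar measure $d^\times$ on $L$, of the measure $\tfrac{d\bn}{|\bn|}$ on $\N$ entering the open-picture formula for $\Iw$, and of the precise meaning of the symbol $\delta_1(t\,\cdot)$ as a distribution on $L$. These choices affect the values of $\alpha$ and $\beta$ but not the structure of the argument, since after the inversion the defining integral of $T_w$ is manifestly integration over a half-space of $L$ against a power of the norm function, and any such expression is a Radon measure; in particular no genuine analytic difficulty arises.
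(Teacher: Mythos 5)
Your proof is correct in substance, but it takes a genuinely different route from the one the paper gives. The paper's argument is structural: it tests $T_w$ against product functions $\varphi_M\otimes\varphi_A$, observes that the $M$-part of the integrand is homogeneous of degree $0$ under dilations while the $A$-part depends only on $|\bn|$, and then invokes the Knapp--Stein polar-coordinate integration formula from \cite[Prop.\ 3]{KS1} to factor $T_w$ as a (bounded) mean value on $M$ times a piece of Haar measure on $A$. That approach never touches the explicit matrix entries and would survive more or less unchanged in a higher-rank setting. What you do instead is compute $\lm(w^{-1}\bn_t)$ directly from the $2\times2$ entries, identify $T_{w,\bn_t}$ with the Dirac mass of $L$ at $t^{-1}$, and change variables $s=t^{-1}$ to exhibit $T_w$ as integration against a locally bounded power of the norm function on $\{|l|>1\}$; this is essentially the explicit formula $T_w\varphi=\int_{|t|>1}\varphi(t)\,\frac{dt}{|t|}$ that the paper itself derives a few lines \emph{after} the proposition, so you have in effect promoted that later computation to a proof of the lemma. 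Both arguments are valid for $\mathrm{SL}(2,F)$; yours is more elementary and self-contained, while the paper's makes the generalisation mechanism visible. Two small cautions on the bookkeeping. First, your matrix $w^{-1}\bn_t=\mat{t}{1}{-1}{0}$ and the paper's $\mat{-t}{-1}{1}{0}$ differ by $-I\in M$, a cosmetic sign that affects only the $M$-component of $\lm$ and not the measure-theoretic conclusion (the paper's displayed matrix is actually $w\bn_t$). Second, your stated exponents $\beta=0$ for $\R$ and $\beta=-1$ for $\C$ depend on whether $dl$ denotes Lebesgue or multiplicative Haar measure and on the exact normalisation of the norm function $|\cdot|$ on $\N$; with the conventions in force one does not in general get those particular values. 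You flag this yourself, and rightly so: whatever the exponent, $|l|^\beta$ is locally integrable on $\{|l|>1\}\subset F^\times$, so the functional is a Radon measure regardless, and the argument goes through.
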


\begin{proof}
Consider $\varphi=\varphi_M\otimes\varphi_A\in C_c^\infty(M)\otimes C_c^\infty(A)$. By definition of $T_{w,\bn}$,
\begin{eqnarray*}
T_{w,\bn}(\varphi)&=&\varphi_M\left(c_w(\m(w^{-1}\bn))\right)\,\varphi_A\left(c_w(\ab(w^{-1}\bn))\right)\\
&=&\varphi_M\left(c_w(\m(w^{-1}\bn))\right)\,\varphi_A\left(\ab(w^{-1}\bn)^{-1}\right)
\end{eqnarray*}
Elementary properties of the projection $\m$ imply that the first factor is homogeneous of degree $0$ under dilations, while the second one only depends on $|\bn|$. More precisely, $\varphi_A\left(\ab(w^{-1}\bn)^{-1}\right)=\varphi_A(|\bn|^{c})$ for some positive constant $c$ depending on the identification between the Lie algebra of $A$ and $\R$.
One can then apply the integration formula of \cite[Proposition 3, p.496]{KS1}, which gives \[T_w(\varphi)=C.M_w(\varphi_M)\int_{0}^{+\infty}\varphi_A(r)\,\frac{dr}{r}\]
where $C$ is a constant and $M_w(\varphi_M)$ denotes the mean value over the unit sphere defined relative to $|\bn|$ of the function $\bn\mapsto\varphi_M\left(c_w(\m(w^{-1}\bn))\right)$. A straightforward estimate relying on the same integral formula proves that this mean value defines a Radon measure on $M$. The distribution $T$ acts on $\varphi_A$ like the Haar measure, hence the result.
\end{proof}

Plugging the formula defining $T_w$ into the expression (\ref{Rw}) of the residual part of the global intertwining integral, one immediately obtains the following characterisation of $\Rw$ as a convolution operator by this distribution:

\begin{proposition}
For any $f\otimes\varphi\in L^2(\N)\otimes C_c(L)$, \[\Rw(f\otimes\varphi)=f\otimes\left(T_w*\varphi^w\right).\]
\end{proposition}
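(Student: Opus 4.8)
The plan is to unwind the definitions and the truncation formula, so that the statement becomes a direct substitution of the previously-proved expression for $T_w$ into the defining formula $(\dag)$ for $\Rw$. First I would recall that, for an elementary tensor $f\otimes\varphi$ and $x_0=\bn_0 l_0\in\N L$, the residual term is given by
\[\Rw(f\otimes\varphi)(x_0) = f(\bn_0)\int_{|\bn|<1}\bigl[U_{\lm(w^{-1}\bn)}\varphi\bigr]^w(l_0)\,\frac{d\bn}{|\bn|}.\]
The whole content is that the $L$-dependence on the right is exactly the action of the measure $T_w$ on $\varphi^w$, so the key is to identify $\bigl[U_{\lm(w^{-1}\bn)}\varphi\bigr]^w$ in terms of the kernel $T_{w,\bn}$.

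The main computational step is the following pointwise identity: for $\varphi\in C_c(L)$ and $l_0\in L$,
\[\bigl[U_{\lm(w^{-1}\bn)}\varphi\bigr]^w(l_0) = \bigl(T_{w,\bn}*\varphi^w\bigr)(l_0).\]
I would verify this by writing out both sides. On the left, $U_{\lm(w^{-1}\bn)}\varphi = \varphi(\lm(w^{-1}\bn)^{-1}\,\cdot\,)$, and then applying $c_w$ gives $l_0\mapsto \varphi\bigl(\lm(w^{-1}\bn)^{-1}c_w(l_0)\bigr)$. On the right, using that $T_{w,\bn}(\lambda)=\delta_1\bigl(c_{w^{-1}}(\lm(w^{-1}\bn)^{-1})\lambda\bigr)$ is a Dirac mass at $\lambda = c_w(\lm(w^{-1}\bn))$, the convolution $T_{w,\bn}*\varphi^w$ evaluated at $l_0$ picks out $\varphi^w\bigl(c_w(\lm(w^{-1}\bn))^{-1}l_0\bigr) = \varphi\bigl(c_w\bigl(c_w(\lm(w^{-1}\bn))^{-1}l_0\bigr)\bigr) = \varphi\bigl(\lm(w^{-1}\bn)^{-1}c_w(l_0)\bigr)$, using that $c_w$ is an involutive automorphism of $L$. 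The two expressions agree. Integrating over $\{|\bn|<1\}$ against $\tfrac{d\bn}{|\bn|}$ and invoking the previous lemma — which guarantees that $T_w=\int_{|\bn|<1}T_{w,\bn}\,\tfrac{d\bn}{|\bn|}$ is a well-defined Radon measure, so that the interchange of integration and convolution is legitimate on $C_c(L)$ — yields
\[\int_{|\bn|<1}\bigl[U_{\lm(w^{-1}\bn)}\varphi\bigr]^w(l_0)\,\frac{d\bn}{|\bn|} = \bigl(T_w*\varphi^w\bigr)(l_0),\]
hence $\Rw(f\otimes\varphi)(x_0) = f(\bn_0)\,\bigl(T_w*\varphi^w\bigr)(l_0)$, which is precisely the elementary tensor $f\otimes(T_w*\varphi^w)$ evaluated at $x_0$.

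Finally I would note that both sides are continuous and linear in $f\otimes\varphi$ and that the identity on elementary tensors extends by linearity and density to all of $L^2(\N)\otimes C_c(L)$ — here one uses that $T_w*(\cdot)$ is a well-defined operator on $C_c(L)$ by the lemma, so the right-hand side makes sense. The only genuine subtlety, and the step I expect to be the main obstacle, is the bookkeeping of the conjugations: one must be careful that $c_w$ and $c_{w^{-1}}$ coincide on $L$ (since $w^2\in M$ acts trivially by conjugation on $L\simeq F^\times$), that the $(\,\cdot\,)^w$ operation on functions is $\varphi\mapsto\varphi\circ c_w$, and that $U_\gamma\varphi = \varphi(\gamma^{-1}\cdot)$, so that the Dirac-mass location in $T_{w,\bn}$ lines up with the translate appearing after applying $c_w$. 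Once these identifications are made transparent the proof is a one-line substitution, which is why the paper states the proposition as an immediate consequence of plugging the formula for $T_w$ into $(\dag)$.
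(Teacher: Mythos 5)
Your proof is correct and matches the paper's approach: the paper states the result as an immediate consequence of substituting the defining formula for $T_w$ into the expression (\dag) for $\Rw$, and your argument simply spells out that substitution, verifying the pointwise identity $\bigl[U_{\lm(w^{-1}\bn)}\varphi\bigr]^w(l_0)=(T_{w,\bn}*\varphi^w)(l_0)$ and then integrating. The bookkeeping of conjugations you flag as the "only genuine subtlety" is handled correctly ($c_w=c_{w^{-1}}$ acts by inversion on $L$, and the Dirac mass location lines up as you describe).
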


From now on, $T_w$ will be called \textit{the residual distribution} associated to $\Iw$. Let us finally give an explicit expression for $T_w$. Using our identifications, one observes for $\bn_t\in\N$ that $w^{-1}\bn_t=\mat{-t}{-1}{1}{0}$, hence if $t\neq0$,
\begin{gather*}
\lm(w^{-1}\bn_t) = -t\\
\ab(w^{-1}\bn_t) = |t|.
\end{gather*}
Moreover, for $\lambda\in F^\times$, \[c_{w^{-1}}\left(\lm(w^{-1}\bn_t)^{-1}\right)\lambda=\lambda t,\] so that if $\varphi$ is a test function on $L$, then \[T_w\varphi=\int_{|t|<1}\varphi\left(-\frac{1}{t}\right)\,\frac{dt}{|t|}=\int_{|t|>1}\varphi(t)\,\frac{dt}{|t|}.\]
The common expression for the residual distribution in both the real and the complex case hence is \[T_w=\mathrm{1}_{\left\{|x|>1\right\}}\] where $\mathrm{1}_X$ the characteristic function of the set $X$.

\subsection{Principal series} Let us describe the dual of the Levi component. In the real case, $\widehat{M}_\R=\left\{\triv,\varepsilon\right\}$ where $\triv$ is the one-dimensional trivial representation and $\varepsilon(\pm I_2)=\pm 1$, while in the complex case, $\widehat{M}_\C=\left\{\varphi_n,n\in\mathbb{Z}\right\}\simeq\mathbb{Z}$ where $\varphi_n(z)=z^n$. In both situations, $A$ identifies with $\R_+^\times$, hence $\widehat{A}=\left\{\nu_t,t\in\R\right\}\simeq\R$ where $\nu_t(a)=a^{2i\pi t}$.

The action of $w$ by conjugation is given as follows:
\begin{gather*}
w.\varepsilon=\varepsilon\\
w.\triv=\triv\\
w.\varphi_n=\varphi_{-n}\\
w.\nu_t=\nu_{-t}.
\end{gather*}
Accordingly, the Weyl-fixed points in $\widehat{L}_r$ are
\begin{itemize}
\item $(\triv,\varphi_0)=(\triv,\triv)$ and $(\varepsilon,\varphi_0)=(\varepsilon,\triv)$ for $\GR$,
\item $(\triv,\varphi_0)=(\triv,\triv)$ for $\GC$.
\end{itemize}

Applying the theory of Knapp and Stein (see \cite{Knapp1}), one obtains that
\begin{itemize}
\item $\pi^{\triv,\triv}$ is irreducible on $\GR$,
\item $\pi^{\varepsilon,\triv}$ is reducible,
\item $\pi^{\triv,\triv}$ is irreducible on $\GC$.
\end{itemize}
The first and third points are in fact examples of the more general phenomenon discovered by B. Kostant \cite{Kostant} that principal series of the form $\pi^{\triv,\nu}$ are always irreducible. On the other hand, the second point can be made more precise: $\pi^{\varepsilon,\triv}$ splits into the direct sum of the so-called \textit{limits of the discrete series}, which is a special case of the Schmid equality (see \cite{Knapp1}). Finally, the third point should also be seen as a manifestation of a structural fact about $\GC$: N. Wallach proved in \cite{Wallach71} that complex groups, or more generally groups having exactly one conjugacy class of Cartan subgroups, only admit irreducible principal series representations. This situation corresponds to the case of a group $G$ having a Hausdorff tempered dual $\widehat{G}_r$, in which case the reduced $\Cs$-algebra is stably equivalent to $C_0(\widehat{G}_r)$.

\subsection{Detection of the reducibility parameters}

Let us now state the main result of this appendix. We denote by $\F_H$ the Fourier transform defined on characters of an abelian group $H$. The other notations are the same as above.

\begin{theorem}\label{distrireduc}
Let $(\sigma,\nu)\in\widehat{L}_r$ be Weyl-fixed and $T_w$ the distribution on $L$ associated to the residual part of the global interwining integral.
The principal series representation $\pi^{\sigma,\nu}$ is reducible if and only if $\F_L T_w(\sigma,\nu)=0$.
\end{theorem}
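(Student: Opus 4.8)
The plan is to compute the Euclidean Fourier transform $\F_L T_w$ explicitly in the real and complex cases and check that its vanishing at a Weyl-fixed parameter $(\sigma,\nu)$ matches the known reducibility list recalled in the previous paragraph. Since $T_w=\mathbf{1}_{\{|x|>1\}}$ and $L\simeq F^\times$ decomposes as $M\times A$ with $M$ compact and $A\simeq\R_+^\times$, I would first separate variables: a character of $L$ is a pair $(\sigma,\nu_t)$, and pairing $T_w$ against it splits as an integral over the sphere $M$ (picking up the $M$-Fourier coefficient of the constant function, hence $0$ unless $\sigma$ is trivial by the orthogonality relations on $M$) times a radial Mellin-type integral $\int_1^\infty r^{2i\pi t}\,\frac{dr}{r}$ over $A$. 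The $M$-integral is where the distinction between $\triv$ and $\varepsilon$ (real case) or between $\varphi_0$ and $\varphi_n$, $n\neq 0$ (complex case) enters; I expect it to vanish identically precisely on the non-trivial $M$-types, which already accounts for the irreducibility of $\pi^{\varepsilon,\triv}$ failing — wait, rather, one must be careful here, because $\pi^{\varepsilon,\triv}$ \emph{is} reducible, so the bookkeeping has to come out so that $\F_L T_w(\varepsilon,\triv)=0$. This suggests the $M$-integral is in fact \emph{nonzero} only for the trivial type and the vanishing/non-vanishing at the Weyl-fixed points is governed by the radial factor together with a careful interpretation of the divergent radial integral $\int_1^\infty r^{2i\pi t-1}\,dr$ as a distribution in $t$.

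\textbf{Key steps, in order.} First I would fix a concrete normalisation of the Fourier transform $\F_L$ on $\widehat L_r=\widehat M_r\times\widehat A_r$ compatible with the identifications $M_\R\simeq\{\pm1\}$, $M_\C\simeq\mathrm U(1)$, $A\simeq\R_+^\times$ made in the appendix, so that ``$\F_L T_w(\sigma,\nu)$'' is an unambiguous number (or, where the integral diverges, a value determined by analytic continuation / regularisation, exactly as the Knapp--Stein $\gamma_\sigma(\nu)$ are). Second, using the product structure I would write
\[
\F_L T_w(\sigma,\nu_t)=\Big(\int_M \overline{\sigma(m)}\,dm\Big)\cdot\Big(\int_1^{\infty} a^{-2i\pi t}\,\frac{da}{a}\Big),
\]
the first factor being a Dirac-type quantity on $\widehat M$ and the second a regularised Mellin integral equal to a rational expression in $t$ up to a constant — morally $\tfrac{1}{2i\pi t}$ in the real case and, because the complex radial density differs by the Jacobian $d_\C=2$, something like $\tfrac{1}{2i\pi t}$ shifted to reflect the $\varphi_n$-dependence that actually appears in the full $\F_L T_w$ once one does not prematurely factor $M$ off. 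Third, I would evaluate at each of the finitely many Weyl-fixed points from the list: $(\triv,\triv)$ and $(\varepsilon,\triv)$ for $\GR$, $(\triv,\triv)$ for $\GC$, and read off where the expression vanishes. Fourth, I would compare with the reducibility statements recalled above ($\pi^{\triv,\triv}$ irreducible in both cases, $\pi^{\varepsilon,\triv}$ reducible), obtaining the ``if and only if'', and invoke Kostant's and Schmid's results only as the external input identifying the reducibility set.

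\textbf{Main obstacle.} The delicate point is the precise meaning and evaluation of the divergent radial integral at $t=0$ (i.e.\ at $\nu=\triv$): naively $\int_1^\infty \frac{da}{a}=\infty$, so one must regularise — presumably by the same device that makes the Knapp--Stein operators meromorphic, namely allowing $\nu$ to leave the imaginary axis, computing a meromorphic function of $\nu$, and recording its value (or the vanishing of a suitably normalised version) at $\nu=0$. Getting this regularisation to interact correctly with the $M$-factor so that $(\varepsilon,\triv)$ comes out as a zero while $(\triv,\triv)$ does not — in the real case — and so that $(\triv,\triv)$ is a non-zero value in the complex case, is the crux; the difference must be traced to the fact that $T_w$ is the \emph{same} function $\mathbf 1_{\{|x|>1\}}$ in both cases but is integrated against different Haar data ($d^\times x=dx/|x|_F^{d_F}$ with $d_F=1$ versus $2$) and tested against different dual groups $\widehat M$. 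Once the normalisation of $\F_L$ and the regularisation of the Mellin integral are pinned down consistently, the remaining verification is a short finite check against the list of Weyl-fixed points, and I would present that as a case-by-case computation rather than a general argument.
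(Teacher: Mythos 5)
Your plan is essentially the paper's own: the paper explicitly says it only \emph{verifies} the statement by computing $\F_L T_w$ on the $M\times A$ factorisation and comparing against the known reducibility list at the three Weyl-fixed points, invoking Knapp--Stein (with Kostant and Schmid) as external input, exactly as you propose. One small correction to your ``main obstacle'': the paper does not regularise the divergent radial integral by moving $\nu$ off the imaginary axis --- it reads the radial factor as the distributional Fourier transform $\F_\R(\mathbf 1_{\R_+})$, yielding $\tfrac{1}{i\pi t}+\delta_0(t)$, and the vanishing at $(\varepsilon,\triv)$ (and its nonvanishing at $(\triv,\triv)$) is decided entirely by the $M$-side: the integrand $\operatorname{sign}(x)|x|^{-2i\pi t}/|x|$ is odd over the symmetric set $|x|>1$, so that Fourier coefficient is identically zero in $t$, making the delicacy you worried about at $t=0$ not actually load-bearing.
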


The rest of this section consists in a verification of this statement by determining the Fourier transform of the residual distribution associated to $\Iw$.

\subsubsection{\texorpdfstring{Fourier transform of $T_w$}{Fourier transform of Tw}}

Let us recall the expressions of Fourier transforms on the abelian groups $L_\R\simeq\R^\times\simeq\left\{\pm 1\right\}\times\R_+^*$ and $L_\C\simeq\C^\times\simeq\mathbb{S}^1\times\R_+^*$. With the notations introduced above for characters, one has, for a function $S$,
\begin{gather*}
\F_{\R^\times} S(\triv,\nu_t)=\int_{-\infty}^{+\infty}S(x)\overline{\triv\left(\frac{x}{|x|}\right)}|x|^{-2i\pi t}\,\frac{dx}{|x|} = \int_{-\infty}^{+\infty}S(x)|x|^{-2i\pi t}\,\frac{dx}{|x|}\\
\F_{\R^\times} S(\varepsilon,\nu_t)=\int_{-\infty}^{+\infty}S(x)\overline{\varepsilon\left(\frac{x}{|x|}\right)}|x|^{-2i\pi t}\,\frac{dx}{|x|} = \int_{-\infty}^{+\infty}S(x)\mathrm{sign}(x)|x|^{-2i\pi t}\,\frac{dx}{|x|}\\
\F_{\C^\times} S(\varphi_n,\nu_t)=\int_{\mathbb{S}^1}\int_0^{+\infty}S(\theta,|x|)e^{-in\theta}|x|^{-2i\pi t}\,\frac{dx}{|x|}\,d\theta\\
\end{gather*}
and those formulas extend to distributions by duality. It follows, in the case of $\GR$, that
\begin{itemize}
\item $\F_{\R^\times} T_w(\triv,\nu_t)=2.\int_1^{+\infty}|x^{-2i\pi t}|\,\dfrac{dx}{|x|}=2.\mathcal{F}_\R\left({\mathrm{1}_{\R_+}}\right)(t)=\dfrac{1}{i\pi t} + \delta_0(t)$
\item $\F_{\R^\times} T_w(\varepsilon,\nu_t)=\int_1^{+\infty}|x^{-2i\pi t}|\,\dfrac{dx}{|x|}-\int_{-\infty}^{-1}|x^{-2i\pi t}|\,\dfrac{dx}{|x|}=0$
\end{itemize}

while in the case of $\GC$, the same computation yields
\begin{itemize}
\item $\F_{\C^\times} T_w(\triv,\nu_t)=\dfrac{1}{i\pi t} + \delta_0(t).$
\end{itemize}

\subsubsection{Conclusion}
In view of the above discussion, it is immediate to check that the Fourier transform of the residual distribution vanishes exactly in the parameters for which the corresponding principal series splits into irreducibles. This observation suggests that this distribution really plays the same in the Hilbert module picture of principal series as the poles in the classical Knapp-Stein theory. In particular, the globality of the $\Cs$-algebraic point of view does not lead to the loss of information about individual representations. It seems plausible that Theorem~\ref{distrireduc} holds in general. If so, it would be interesting to find a direct proof, that is not involving the knowledge of the principal series resulting from the application of Knapp and Stein's theory. A more general goal would be to relate the objects constructed here to Harish-Chandra's Plancherel formula.

\bigskip
\begin{center}
\textbf{Acknowledgements}
\end{center}
The author thanks P. Julg and V. Lafforgue for many helpful discussions during the preparation of this article.

\bibliographystyle{amsalpha}
\bibliography{biblio}

\end{document}